\newtheorem{theorem}{Theorem}
\theoremstyle{plain}
\newtheorem{corollary}{Corollary}
\newtheorem{definition}{Definition}
\newtheorem{lemma}{Lemma}
\begin{document}
\title[Inverse Nodal Problem for a CFDO]{Inverse Nodal Problem for a
Conformable Fractional Diffusion Operator}
\subjclass[2010]{ 26A33, 34B24, 34A55, 34L05, 34L20}
\keywords{Diffusion Operator, Inverse Nodal Problem, Conformable Fractional.}

\begin{abstract}
In this paper, a diffusion operator including conformable fractional
derivatives of order $\alpha $ $\left( 0<\alpha \leq 1\right) $ is
considered. The asymptotics of the eigenvalues, eigenfunctions and nodal
points of the operator are obtained. Furthermore, an effective procedure for
solving the inverse nodal problem is given.
\end{abstract}

\author{YA\c{S}AR \c{C}AKMAK}
\maketitle

\section{\textbf{Introduction}}

The fractional derivative based on 1695 is widely used in applied
mathematics and mathematical analysis. Since then, many researchers have
developed different types of fractional derivative (see [1]-[4]). Unlike
classical Newtonian derivatives, a fractional derivative is given via an
integral form. For example, well-known Riemann-Liouville fractional
derivative is one of them and is defined by%
\begin{equation*}
D_{t}^{\alpha }\left( f\right) \left( t\right) =\frac{1}{\Gamma \left(
n-\alpha \right) }\frac{d^{n}}{dt^{n}}\int\limits_{a}^{t}\frac{f\left(
x\right) }{\left( t-x\right) ^{\alpha -n+1}}dx,
\end{equation*}%
for $\alpha \in \left[ n-1,n\right) .$

In 2014, Khalil et al. introduced the definition of conformable fractional
derivative\ [5]. In 2015, the basic properties and main results of this
derivative was given by Abdeljawad and Atangana et al. ([6], [7]). The
derivative arises in various fields such as quantum mechanics, dynamical
systems, time scale problems, diffusions, conservation of mass, etc. (see
[8]-[11]).

For about a century, inverse spectral theory for the different types of
operators such as Sturm-Liouville, Dirac and diffusion has been
investigated. The first and important result in this theory belongs to
Ambarzumyan (see [12]). After this study, the theory has been developed by
many authors. In recent years, the direct and inverse problems for the
Sturm-Liouville and Dirac operators which include fractional derivative have
been studied (see [13]-[20]). However, in current literature, there are no
results in the inverse spectral theory for a diffusion operator which
include conformable fractional derivative. These problems play an important
role in mathematics and have many applications in natural sciences and
engineering (see [21]-[25]).

The inverse nodal problems consist in recovering operators from given a
dense set of zeros (nodes or nodal points) of eigenfunctions. In 1988,
McLaughlin gave a solution of inverse nodal problem for the Sturm-Liouville
operator (see [26]). Then, many important results for both the diffusion
operators\ and the Sturm-Liouville operators have been studied by several
researchers (see [27]-[41] and references therein).

In the present paper, we consider a diffusion operator with Dirichlet
conditions which include conformable fractional derivatives of order $\alpha 
$ $\left( 0<\alpha \leq 1\right) $ instead of the ordinary derivatives in a
traditional diffusion operator. We reconstruct the potentials of the
diffusion operator from nodes of its eigenfunctions and give an algorithm
for solving the inverse nodal problem.

We note that the analogous results can be obtained also for other types of
boundary conditions.

\section{\textbf{Preliminaries}}

In this section, Firstly, we recall known some concepts of the conformable
fractional calculus. Then, we introduce a conformable fractional diffusion
operator with Dirichlet boundary conditions on $\left[ 0,\pi \right] $.

\begin{definition}
Let $f:[0,\infty )\rightarrow 
%TCIMACRO{\U{211d} }%
%BeginExpansion
\mathbb{R}
%EndExpansion
$ be a given function. Then, the conformable fractional derivative of $f$ of
order $\alpha $ with respect to $x$ is defined by%
\begin{equation*}
D_{x}^{\alpha }f(x)=\underset{h\rightarrow 0}{\lim }\dfrac{f(x+hx^{1-\alpha
})-f(x)}{h},\text{ }D_{x}^{\alpha }f(0)=\underset{x\rightarrow 0^{+}}{\lim }%
D^{\alpha }f(x),
\end{equation*}%
for all $x>0,$ $\alpha \in (0,1].$ If this limit exist and finite at $x_{0},$
we say $f$ is $\alpha -$differentiable at $x_{0}.$ Note that if $f$ is
differentiable, then, 
\begin{equation*}
D_{x}^{\alpha }f(x)=x^{1-\alpha }f^{\prime }(x).
\end{equation*}
\end{definition}

\begin{definition}
The conformable fractional Integral starting from $0$ of order $\alpha $ is
defined by%
\begin{equation*}
I_{\alpha }f(x)=\int_{0}^{x}f(t)d_{\alpha }t=\int_{0}^{x}t^{\alpha -1}f(t)dt,%
\text{ for all }x>0.
\end{equation*}
\end{definition}

\begin{lemma}
Let $f:[a,\infty )\rightarrow 
%TCIMACRO{\U{211d} }%
%BeginExpansion
\mathbb{R}
%EndExpansion
$ be any continuous function. Then, for all $x>a$, we have 
\begin{equation*}
D_{x}^{\alpha }I_{\alpha }f(x)=f(x).
\end{equation*}
\end{lemma}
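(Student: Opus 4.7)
The plan is to reduce this identity to the ordinary Fundamental Theorem of Calculus by exploiting the bridge, given in Definition 1, between the conformable fractional derivative and the classical derivative: namely $D_{x}^{\alpha}g(x)=x^{1-\alpha}g'(x)$ whenever $g$ is differentiable. The strategy has two short steps: differentiate the conformable integral in the ordinary sense, then convert that ordinary derivative into a conformable one.

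First, I would write out the conformable integral from Definition 2, namely
\begin{equation*}
I_{\alpha}f(x)=\int_{0}^{x}t^{\alpha-1}f(t)\,dt.
\end{equation*}
Since $f$ is continuous on $[a,\infty)$ and the weight $t^{\alpha-1}$ is continuous for $t>0$, the integrand is continuous on $(0,\infty)$, so the classical Fundamental Theorem of Calculus applies and yields that $I_{\alpha}f$ is (ordinarily) differentiable with
\begin{equation*}
\bigl(I_{\alpha}f\bigr)'(x)=x^{\alpha-1}f(x)
\end{equation*}
for every $x>0$ (in particular for $x>a$).

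Second, because $I_{\alpha}f$ is differentiable in the classical sense, the second clause of Definition 1 applies, giving
\begin{equation*}
D_{x}^{\alpha}\bigl(I_{\alpha}f\bigr)(x)=x^{1-\alpha}\bigl(I_{\alpha}f\bigr)'(x)=x^{1-\alpha}\cdot x^{\alpha-1}f(x)=f(x),
\end{equation*}
which is exactly the claimed identity.

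There is essentially no obstacle here beyond bookkeeping; the only point requiring a word of care is the behavior of the weight $t^{\alpha-1}$ near $t=0$ when $a=0$, but since the lemma is only asserted for $x>a$ and the integrability of $t^{\alpha-1}$ on $(0,x]$ is automatic for $\alpha\in(0,1]$, this causes no trouble. The whole proof is therefore a direct reduction of a fractional identity to a classical one via Definition 1.
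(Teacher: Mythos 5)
Your proof is correct and is the standard argument: reduce to the classical Fundamental Theorem of Calculus via the identity $D_{x}^{\alpha}g(x)=x^{1-\alpha}g'(x)$ for differentiable $g$; the paper itself states this lemma without proof as a known preliminary from the conformable fractional calculus literature, and your derivation is exactly the expected one. The only cosmetic point is the lower limit of integration: since $f$ is defined on $[a,\infty)$, the integral should be taken from $a$ (the paper's Definition 2 fixes the base point $0$), but the FTC step depends only on continuity near the variable endpoint $x>a$, so your argument goes through verbatim with that adjustment.
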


\begin{lemma}
Let $f:(a,b)\rightarrow 
%TCIMACRO{\U{211d} }%
%BeginExpansion
\mathbb{R}
%EndExpansion
$ be any differentiable function. Then, for all $x>a$, we have 
\begin{equation*}
I_{\alpha }D_{x}^{\alpha }f(x)=f(x)-f(a).
\end{equation*}
\end{lemma}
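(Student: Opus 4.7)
The plan is to reduce the statement to the classical Newton--Leibniz formula by unwinding the two definitions. Since $f$ is assumed differentiable on $(a,b)$, the closing remark of Definition 1 gives the closed form $D_{t}^{\alpha }f(t)=t^{1-\alpha }f^{\prime }(t)$. Definition 2 expresses $I_{\alpha }$ as an ordinary Riemann integral against the weight $t^{\alpha -1}$. Composing the two turns the integrand into $t^{\alpha -1}\cdot t^{1-\alpha }f^{\prime }(t)=f^{\prime }(t)$, and the classical fundamental theorem of calculus then yields the right-hand side.

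Concretely, the computation I would carry out is
\begin{equation*}
I_{\alpha }D_{t}^{\alpha }f(x)=\int_{a}^{x}t^{\alpha -1}\bigl(t^{1-\alpha }f^{\prime }(t)\bigr)\,dt=\int_{a}^{x}f^{\prime }(t)\,dt=f(x)-f(a),
\end{equation*}
where the first equality invokes Definitions 1 and 2, the second is the pointwise cancellation of the powers of $t$, and the third is the standard Newton--Leibniz formula applied to the differentiable function $f$ on $[a,x]\subset (a,b)$.

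No real obstacle is expected; the content of the lemma is essentially a consistency check between Definitions 1 and 2. The only minor subtlety worth flagging is the endpoint convention: Definition 2 is written with lower limit $0$, whereas the hypothesis places $f$ on $(a,b)$ and the conclusion features $f(a)$. I would therefore read $I_{\alpha }$ in its natural localized form $\int_{a}^{x}t^{\alpha -1}(\cdot)\,dt$ inherited from restricting the domain to $(a,b)$. With this reading the cancellation of $t^{\alpha -1}$ and $t^{1-\alpha }$ causes no issue away from $t=0$, and the one-line computation above closes the proof.
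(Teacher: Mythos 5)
Your computation is correct and is precisely the standard argument for this identity in the conformable-calculus literature ([5], [6]): write $D_{t}^{\alpha }f(t)=t^{1-\alpha }f^{\prime }(t)$, cancel against the weight $t^{\alpha -1}$ in the fractional integral, and apply the Newton--Leibniz formula; the paper itself states this lemma without proof as a quoted preliminary, so your argument coincides with the intended (cited) one. Your reading of $I_{\alpha }$ with lower limit $a$ rather than the $0$ of Definition 2 is the correct convention here, since the conclusion involves $f(a)$.
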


\begin{definition}
\textbf{(}$\alpha -$\textbf{integration by parts): }Let $f,g:[a,b]%
\rightarrow 
%TCIMACRO{\U{211d} }%
%BeginExpansion
\mathbb{R}
%EndExpansion
$ be two conformable fractional differentiable functions. Then,%
\begin{equation*}
\int_{a}^{b}f(x)D_{x}^{\alpha }g(x)d_{\alpha }x=\left. f(x)g(x)\right\vert
_{a}^{b}-\int_{a}^{b}g(x)D_{x}^{\alpha }f(x)d_{\alpha }x.
\end{equation*}
\end{definition}

\begin{definition}
The space $C_{\alpha }^{n}[a,b]$ consists of all functions defined on the
interval $[a,b]$ which are continuously $\alpha -$differentiable up to order 
$n.$
\end{definition}

\begin{definition}
Let $1\leq p<\infty ,$ $a>0.$ The space $L_{\alpha }^{p}\left( 0,a\right) $
consists of all functions $f:\left[ 0,a\right] \rightarrow 
%TCIMACRO{\U{211d} }%
%BeginExpansion
\mathbb{R}
%EndExpansion
$ satisfying the condition 
\begin{equation*}
\left( \int_{0}^{a}\left\vert f(x)\right\vert ^{p}d_{\alpha }x\right)
^{1/p}<\infty .
\end{equation*}
\end{definition}

\begin{lemma}
\lbrack 43], The space $L_{\alpha }^{p}\left( 0,a\right) $ associated with
the norm function 
\begin{equation*}
\left\Vert f\right\Vert _{p,\alpha }:=\left( \int_{0}^{a}\left\vert
f(x)\right\vert ^{p}d_{\alpha }x\right) ^{1/p}
\end{equation*}%
is a Banach space. Moreover if $p=2$ then $L_{\alpha }^{2}\left( 0,a\right) $
associated with the inner product for $f,$ $g\in L_{\alpha }^{2}\left(
0,a\right) $%
\begin{equation*}
\left\langle f,g\right\rangle :=\int_{0}^{a}f(x)\overline{g(x)}d_{\alpha }x
\end{equation*}%
is a Hilbert space.
\end{lemma}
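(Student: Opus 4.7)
The plan is to reduce everything to the classical Lebesgue $L^{p}$ theory by recognizing that the conformable integral is simply a weighted Lebesgue integral. From Definition 2,
\[
\|f\|_{p,\alpha}^{p}=\int_{0}^{a}|f(x)|^{p}\,d_{\alpha}x=\int_{0}^{a}|f(x)|^{p}\,x^{\alpha-1}\,dx,
\]
so $L_{\alpha}^{p}(0,a)$ coincides with the weighted Lebesgue space $L^{p}\bigl((0,a];\,d\mu_{\alpha}\bigr)$ associated with the finite Borel measure $d\mu_{\alpha}:=x^{\alpha-1}\,dx$, whose total mass is $a^{\alpha}/\alpha$. This identification makes the lemma essentially a corollary of standard measure-theoretic results, but I would still carry out the verification in two stages.

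First I would check the norm axioms for $\|\cdot\|_{p,\alpha}$. Nonnegativity and positive homogeneity are immediate from the linearity of the integral, the triangle inequality is Minkowski's inequality applied to the measure $d\mu_{\alpha}$, and definiteness holds under the usual identification of functions agreeing $\mu_{\alpha}$-almost everywhere. For completeness, I would follow the classical argument: given a Cauchy sequence $(f_{n})$, extract a subsequence $(f_{n_{k}})$ satisfying $\|f_{n_{k+1}}-f_{n_{k}}\|_{p,\alpha}\le 2^{-k}$, let $g=\sum_{k}|f_{n_{k+1}}-f_{n_{k}}|$, and use Minkowski together with monotone convergence to place $g$ in $L_{\alpha}^{p}$. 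From this I would deduce pointwise $\mu_{\alpha}$-a.e.\ convergence of $(f_{n_{k}})$ to some $f$, and then invoke Fatou's lemma to upgrade this to norm convergence of the full sequence.

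For the case $p=2$, I would verify the inner-product axioms directly: conjugate symmetry from the behavior of complex conjugation under the integral, sesquilinearity from the linearity of the integral, and positive definiteness from $\langle f,f\rangle=\|f\|_{2,\alpha}^{2}$, with Cauchy--Schwarz ensuring absolute convergence of $\int_{0}^{a}f(x)\overline{g(x)}\,d_{\alpha}x$ for all $f,g\in L_{\alpha}^{2}(0,a)$. Since this inner product induces the norm $\|\cdot\|_{2,\alpha}$, already shown to be complete, $L_{\alpha}^{2}(0,a)$ is Hilbert.

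The only potentially delicate point is the singularity of the weight $x^{\alpha-1}$ at the origin when $\alpha<1$; however, because $\mu_{\alpha}$ is a finite, hence $\sigma$-finite, Borel measure on $(0,a]$, the classical $L^{p}$ theorems transfer verbatim, and no substantive obstacle is expected.
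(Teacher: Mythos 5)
Your argument is correct: identifying $d_{\alpha}x$ with the weighted measure $x^{\alpha-1}\,dx$, which is finite on $(0,a]$ since $\alpha-1>-1$, reduces the lemma to the classical Riesz--Fischer theorem for $L^{p}$ of a ($\sigma$-finite) measure space, and your completeness and inner-product verifications are the standard ones. Note that the paper itself offers no proof of this statement: it is quoted from reference [43], so there is no in-paper argument to compare against; your reduction to weighted Lebesgue theory is exactly the expected route, and the only point worth making explicit is the one you already flagged, namely that the weight $x^{\alpha-1}$ is positive a.e.\ and locally integrable at the origin, so ``$\mu_{\alpha}$-a.e.'' coincides with ``Lebesgue-a.e.'' and the quotient identification in the definiteness axiom is the usual one.
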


\begin{definition}
\lbrack 43], Let $p\in 
%TCIMACRO{\U{211d} }%
%BeginExpansion
\mathbb{R}
%EndExpansion
$ be such that $p\geq 1.$ The Sobolev space $W_{\alpha }^{p}\left(
0,a\right) $ consists of all functions on the interval $\left[ 0,a\right] $,
such that $f(x)$ is absolutely continuous and $D_{x}^{\alpha }f(x)\in
L_{\alpha }^{p}\left( 0,a\right) .$
\end{definition}

More detail knowledge about the conformable fractional calculus can be seen
in [5] and [6].

Now, let us consider the boundary value problem $L_{\alpha }=L_{\alpha
}(p(x),q(x))$\ of the form%
\begin{eqnarray}
&&\text{\ }\left. \ell _{\alpha }y:=-D_{x}^{\alpha }D_{x}^{\alpha }y+\left[
2\lambda p(x)+q(x)\right] y=\lambda ^{2}y\text{, \ \ }0<x<\pi \right.
\medskip \\
&&\text{ }\left. U(y):=y(0)=0\right. \medskip \\
&&\text{ }\left. V(y):=y(\pi )=0\right. \medskip
\end{eqnarray}%
where $\lambda $ is the spectral parameter$,$ $p(x),$ $D_{x}^{\alpha }p(x),$ 
$q(x)\in W_{\alpha }^{2}\left( 0,\pi \right) $ are real valued functions, $%
p(x)\neq $const. and 
\begin{equation}
\int_{0}^{\pi }p(x)d_{\alpha }x=0.
\end{equation}

The operator $L_{\alpha }$ is called as conformable fractional diffusion
operator (CFDO).

Let the functions $S\left( x,\lambda \right) $ and $\psi \left( x,\lambda
\right) $ be the solutions of the equation $(1)$ satisfying the initial
conditions%
\begin{equation}
S\left( 0,\lambda \right) =0,\text{ }D_{x}^{\alpha }S\left( 0,\lambda
\right) =1\text{ and }\psi \left( \pi ,\lambda \right) =0,\text{ }%
D_{x}^{\alpha }\psi \left( \pi ,\lambda \right) =1
\end{equation}%
\newline
respectively.

Denote%
\begin{equation}
\Delta \left( \lambda \right) =W_{\alpha }\left[ S\left( x,\lambda \right)
,\psi \left( x,\lambda \right) \right] =S\left( x,\lambda \right)
D_{x}^{\alpha }\psi \left( x,\lambda \right) -\psi \left( x,\lambda \right)
D_{x}^{\alpha }S\left( x,\lambda \right) .
\end{equation}

Where, the function $W_{\alpha }\left[ \varphi \left( x,\lambda \right)
,\psi \left( x,\lambda \right) \right] $ is called the fractional Wronskian
of the functions $S\left( x,\lambda \right) $ and $\psi \left( x,\lambda
\right) .$ It is proven in [17] that $W_{\alpha }$ does not depend on $x$
and putting $x=0$ and $x=\pi $ in (6) it can be written as%
\begin{equation}
\Delta \left( \lambda \right) =V\left( S\right) =-U\left( \psi \right) .
\end{equation}

\begin{definition}
The function $\Delta \left( \lambda \right) $ is called the characteristic
function of the problem $L_{\alpha }.$
\end{definition}

Let us calculate an asymptotic of the eigenvalues of the problem $L_{\alpha
} $. Firstly, we rewritten equation (1) as%
\begin{equation}
D_{x}^{\alpha }D_{x}^{\alpha }y+\frac{D_{x}^{\alpha }p(x)}{\lambda -p(x)}%
D_{x}^{\alpha }y+\left( \lambda -p(x)\right) ^{2}y=\left(
q(x)+p^{2}(x)\right) y+\frac{D_{x}^{\alpha }p(x)}{\lambda -p(x)}%
D_{x}^{\alpha }y.
\end{equation}%
It is easily shown that the system of functions $\left\{ \cos \left( \frac{%
\lambda }{\alpha }x^{\alpha }-Q(x)\right) ,\text{ }\sin \left( \frac{\lambda 
}{\alpha }x^{\alpha }-Q(x)\right) \right\} $ is a fundamental system for the
differential equation 
\begin{equation}
D_{x}^{\alpha }D_{x}^{\alpha }y+\frac{D_{x}^{\alpha }p(x)}{\lambda -p(x)}%
D_{x}^{\alpha }y+\left( \lambda -p(x)\right) ^{2}y=0
\end{equation}%
\newline
where 
\begin{equation}
Q(x):=\int_{0}^{x}p(t)d_{\alpha }t.
\end{equation}%
\newline
By the method of variation of parameters general solution of equation (8) or
(1) is%
\begin{eqnarray}
&&\left. y(x,\lambda )=c_{1}\cos \left( \tfrac{\lambda }{\alpha }x^{\alpha
}-Q(x)\right) +c_{2}\sin \left( \tfrac{\lambda }{\alpha }x^{\alpha
}-Q(x)\right) \right. \medskip  \notag \\
&&+\int_{0}^{x}\tfrac{\sin \left[ \frac{\lambda }{\alpha }\left( x^{\alpha
}-t^{\alpha }\right) -Q(x)+Q(t)\right] }{\lambda -p(t)}\left[ \left(
q(t)+p^{2}(t)\right) y(t,\lambda )+\tfrac{D_{t}^{\alpha }p(t)}{\lambda -p(t)}%
D_{t}^{\alpha }y(t,\lambda )\right] d_{\alpha }t.
\end{eqnarray}%
\newline
Since $S(x,\lambda )$ is the solution of equation (1) satisfying the initial
conditions (5). From (11), we get%
\begin{eqnarray}
&&\left. S(x,\lambda )=\tfrac{1}{\lambda -p(0)}\sin \left( \tfrac{\lambda }{%
\alpha }x^{\alpha }-Q(x)\right) \right. \medskip  \notag \\
&&+\int_{0}^{x}\tfrac{\sin \left[ \frac{\lambda }{\alpha }\left( x^{\alpha
}-t^{\alpha }\right) -Q(x)+Q(t)\right] }{\lambda -p(t)}\left[ \left(
q(t)+p^{2}(t)\right) S(t,\lambda )+\tfrac{D_{t}^{\alpha }p(t)}{\lambda -p(t)}%
D_{t}^{\alpha }S(t,\lambda )\right] d_{\alpha }t
\end{eqnarray}%
and%
\begin{eqnarray}
&&\left. D_{x}^{\alpha }S(x,\lambda )=\left( \lambda -p(x)\right) \left\{ 
\tfrac{1}{\lambda -p(0)}\cos \left( \tfrac{\lambda }{\alpha }x^{\alpha
}-Q(x)\right) \right. \right. \medskip  \notag \\
&&+\left. \int_{0}^{x}\tfrac{\cos \left[ \frac{\lambda }{\alpha }\left(
x^{\alpha }-t^{\alpha }\right) -Q(t)\right] }{\lambda -p(t)}\left[ \left(
q(t)+p^{2}(t)\right) S(t,\lambda )+\tfrac{D_{t}^{\alpha }p(t)}{\lambda -p(t)}%
D_{t}^{\alpha }S(t,\lambda )\right] d_{\alpha }t\right\} .
\end{eqnarray}

\begin{theorem}
For $\left\vert \lambda \right\vert \rightarrow \infty ,$ the following
asymptotic formula is valid:%
\begin{eqnarray}
&&\left. S(x,\lambda )=\tfrac{1}{\lambda }\sin \left( \tfrac{\lambda }{%
\alpha }x^{\alpha }-Q(x)\right) \right. \medskip  \notag \\
&&+\tfrac{1}{2\lambda ^{2}}\left\{ \left( p(x)+p(0)\right) \sin \left( 
\tfrac{\lambda }{\alpha }x^{\alpha }-Q(x)\right) \right. \medskip  \notag \\
&&-\left( \int_{0}^{x}\left( q(t)+p^{2}(t)\right) d_{\alpha }t\right) \cos
\left( \tfrac{\lambda }{\alpha }x^{\alpha }-Q(x)\right) \medskip  \notag \\
&&+\int_{0}^{x}\left( q(t)+p^{2}(t)\right) \cos \left[ \tfrac{\lambda }{%
\alpha }\left( x^{\alpha }-2t^{\alpha }\right) -Q(x)+2Q(t)\right] d_{\alpha
}t\medskip  \notag \\
&&\left. +\int_{0}^{x}D_{t}^{\alpha }p(t)\sin \left[ \tfrac{\lambda }{\alpha 
}\left( x^{\alpha }-2t^{\alpha }\right) -Q(x)+2Q(t)\right] d_{\alpha
}t\right\} \medskip \\
&&+\tfrac{1}{4\lambda ^{3}}\left\{ \left[ 4p^{2}(0)+\tfrac{2\left(
p(x)+p(0)\right) ^{1+\alpha }-2^{2+\alpha }p^{1+\alpha }(0)+\left(
p(x)-p(0)\right) ^{1+\alpha }}{1+\alpha }\right. \right. \medskip  \notag \\
&&\left. -\tfrac{1}{2}\left( \int_{0}^{x}\left( q(t)+p^{2}(t)\right)
d_{\alpha }t\right) ^{2}\right] \sin \left( \tfrac{\lambda }{\alpha }%
x^{\alpha }-Q(x)\right) \medskip  \notag \\
&&\left. -\left( \int_{0}^{x}\left( q(t)+p^{2}(t)\right) \left(
p(x)+p(0)+2p(t)\right) d_{\alpha }t\right) \cos \left( \tfrac{\lambda }{%
\alpha }x^{\alpha }-Q(x)\right) \right\} \medskip  \notag \\
&&+o\left( \tfrac{1}{\lambda ^{3}}\exp \left( \tfrac{\left\vert \tau
\right\vert }{\alpha }x^{\alpha }\right) \right)  \notag
\end{eqnarray}%
uniformly with respect to $x\in \lbrack 0,\pi ],$ where $\tau =\func{Im}%
\lambda .$
\end{theorem}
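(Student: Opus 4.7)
The plan is to iterate the integral representation (12) (along with its companion (13) for $D_{x}^{\alpha }S$) by successive substitution, reading off the coefficients of $\lambda^{-1},\lambda^{-2},\lambda^{-3}$ after expanding every occurrence of $(\lambda-p(t))^{-1}$ as a geometric series in $1/\lambda$ and applying the product-to-sum identities $\sin A\cos B=\tfrac{1}{2}[\sin(A+B)+\sin(A-B)]$ and $\sin A\sin B=\tfrac{1}{2}[\cos(A-B)-\cos(A+B)]$ to the trigonometric kernels at arguments $(\lambda/\alpha)(x^{\alpha}-t^{\alpha})-Q(x)+Q(t)$ and $(\lambda/\alpha)t^{\alpha}-Q(t)$.

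First I would establish an a priori bound. Since the kernels in (12)-(13) satisfy $|\sin(\cdot)|,|\cos(\cdot)|\leq\exp((|\tau|/\alpha)(x^{\alpha}-t^{\alpha}))$ and $p,D_{x}^{\alpha}p,q\in W_{\alpha}^{2}(0,\pi)$, the coefficients $q+p^{2}$ and $D_{t}^{\alpha}p/(\lambda-p)$ are bounded in $L_{\alpha}^{1}$ for large $|\lambda|$. A Gronwall estimate in the $d_{\alpha}t$ measure applied to (12)-(13) then yields the rough bounds $S(x,\lambda)=O(|\lambda|^{-1}\exp((|\tau|/\alpha)x^{\alpha}))$ and $D_{x}^{\alpha}S(x,\lambda)=O(\exp((|\tau|/\alpha)x^{\alpha}))$ uniformly on $[0,\pi]$. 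Substituting these rough bounds together with the leading approximations $S\sim\lambda^{-1}\sin[(\lambda/\alpha)t^{\alpha}-Q(t)]$ and $D_{t}^{\alpha}S\sim\cos[(\lambda/\alpha)t^{\alpha}-Q(t)]$ into the right-hand side of (12) and invoking the product-to-sum identities splits each integrand into a slowly varying part (at argument $(\lambda/\alpha)x^{\alpha}-Q(x)$), which factors out of the integral, and a rapidly oscillating part (at argument $(\lambda/\alpha)(x^{\alpha}-2t^{\alpha})-Q(x)+2Q(t)$), which is retained. The slowly varying contribution from the $D_{t}^{\alpha}p$-integrand evaluates to $\tfrac{1}{2\lambda^{2}}(p(x)-p(0))\sin(\cdot)$ (using $\int_{0}^{x}D_{t}^{\alpha}p\,d_{\alpha}t=p(x)-p(0)$), and combined with the $p(0)\lambda^{-2}\sin(\cdot)$ piece from the expansion $(\lambda-p(0))^{-1}=\lambda^{-1}+p(0)\lambda^{-2}+p^{2}(0)\lambda^{-3}+O(\lambda^{-4})$ of the leading term of (12) produces exactly $(p(x)+p(0))/(2\lambda^{2})\sin(\cdot)$; the slowly varying contribution from the $(q+p^{2})$-integrand gives $-\tfrac{1}{2\lambda^{2}}\bigl(\int_{0}^{x}(q+p^{2})d_{\alpha}t\bigr)\cos(\cdot)$; and the rapidly oscillating parts are precisely the two integrals comprising the rest of the $1/\lambda^{2}$ block in (14).

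The $1/\lambda^{3}$ block is obtained by a second iteration: I feed the $1/\lambda^{2}$ refinement of $S$ (and the parallel refinement of $D_{t}^{\alpha}S$ from (13)) back into (12) and retain also the $p(t)\lambda^{-2}$ and $p^{2}(t)\lambda^{-3}$ terms in the geometric expansion of $(\lambda-p(t))^{-1}$, together with the $p^{2}(0)\lambda^{-3}\sin(\cdot)=(4p^{2}(0))/(4\lambda^{3})\sin(\cdot)$ contribution already noted. The $-\tfrac{1}{8\lambda^{3}}\bigl(\int(q+p^{2})d_{\alpha}t\bigr)^{2}\sin(\cdot)$ contribution arises from substituting the slowly varying $\cos(\cdot)$ part of $S$ at order $\lambda^{-2}$ back into the $(q+p^{2})$-integrand and applying a further product-to-sum identity. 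The $-\tfrac{1}{4\lambda^{3}}\bigl(\int(q+p^{2})(p(x)+p(0)+2p(t))d_{\alpha}t\bigr)\cos(\cdot)$ term assembles three pieces proportional respectively to $p(x)$, $p(0)$, and $2p(t)$, coming from amplitude corrections to $S$ and $D_{t}^{\alpha}S$ and from the $p(t)\lambda^{-2}$ factor in the expansion of $(\lambda-p(t))^{-1}$. The closed-form quantities $(p(x)\pm p(0))^{1+\alpha}/(1+\alpha)$ emerge by applying the $\alpha$-integration by parts of Definition 5 to integrals of the shape $\int_{0}^{x}(p(x)\pm p(0)\pm 2p(t))^{\alpha}D_{t}^{\alpha}p(t)\,d_{\alpha}t$, using the $\alpha$-chain rule $D_{t}^{\alpha}[g(p(t))]=g^{\prime}(p(t))D_{t}^{\alpha}p(t)$ with antiderivative $g(u)=(u\pm p(0))^{1+\alpha}/(1+\alpha)$, whereupon the integral collapses to its boundary values. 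Any leftover is either an $O(\lambda^{-4})$ amplitude or a rapidly oscillating $L_{\alpha}^{2}$-integral that is $o(\lambda^{-3}\exp((|\tau|/\alpha)x^{\alpha}))$ by the Riemann-Lebesgue lemma in the $d_{\alpha}t$ measure.

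The main obstacle is the bookkeeping in the second iteration: many contributions of comparable size $\lambda^{-3}$ must be consolidated, and in particular one must recognise that certain combinations of $\alpha$-integrals of $D_{t}^{\alpha}p$ against slowly varying powers of $p$ admit the antiderivative $g(u)=(u\pm p(0))^{1+\alpha}/(1+\alpha)$, producing the characteristic non-trigonometric closed forms in the $\sin$ coefficient at order $1/\lambda^{3}$. Once this identification is made, the remaining computation is routine, if laborious, and the error control reduces to standard Riemann-Lebesgue bounds for the surviving rapidly oscillating $L_{\alpha}^{2}$-integrals.
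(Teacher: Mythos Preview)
Your proposal is correct and follows precisely the route indicated in the paper: successive approximations applied to the Volterra system (12)--(13) together with the geometric (Taylor) expansion of $(\lambda-p)^{-1}$, with product-to-sum identities to separate slowly varying from rapidly oscillating parts. The paper's own proof is a one-sentence sketch (``Applying successive approximations method to the equations (12) and taking into account Taylor's expansion formula for the function $\frac{1}{1-u}$\ldots''), so your write-up in fact supplies the intermediate mechanics---the Gronwall a~priori bound, the two-stage iteration, and the identification of the $(p(x)\pm p(0))^{1+\alpha}/(1+\alpha)$ contributions via the $\alpha$-chain rule---that the paper omits.
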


\begin{proof}
We denote $\medskip $

$S_{0}(x,\lambda )=\tfrac{\sin \left( \tfrac{\lambda }{\alpha }x^{\alpha
}-Q(x)\right) }{\lambda -p(0)}\medskip $\newline
and$\medskip $

$S_{n}(x,\lambda )\medskip $

$=\int\limits_{0}^{x}\tfrac{\sin \left[ \frac{\lambda }{\alpha }\left(
x^{\alpha }-t^{\alpha }\right) -Q(x)+Q(t)\right] }{\lambda -p(t)}\left[
\left( q(t)+p^{2}(t)\right) S_{n-1}(t,\lambda )+\tfrac{D_{t}^{\alpha }p(t)}{%
\lambda -p(t)}D_{t}^{\alpha }S_{n-1}(t,\lambda )\right] d_{\alpha }t,$ for $%
n=1,2,\ldots .\medskip $

Applying successive approximations method to the equations (12) and taking
into account Taylor's expansion formula for the function $\frac{1}{1-u},$ $%
u\rightarrow 0$, we arrive at the estimates (14).
\end{proof}

The eigenvalues of $L_{\alpha }$ coincide with the zeros of its
characteristic function $\Delta (\lambda )=S(\pi ,\lambda ).$ Hence, using
the formulae (4) and (14) one can establish the following asymptotic%
\begin{eqnarray}
&&\left. \Delta (\lambda )=\tfrac{1}{\lambda }\sin \left( \tfrac{\lambda }{%
\alpha }\pi ^{\alpha }\right) \right. \medskip   \notag \\
&&+\tfrac{1}{2\lambda ^{2}}\left\{ \left( p(\pi )+p(0)\right) \sin \left( 
\tfrac{\lambda }{\alpha }\pi ^{\alpha }\right) \right. \medskip   \notag \\
&&-\left( \int_{0}^{\pi }\left( q(t)+p^{2}(t)\right) d_{\alpha }t\right)
\cos \left( \tfrac{\lambda }{\alpha }\pi ^{\alpha }\right) \medskip   \notag
\\
&&+\int_{0}^{\pi }\left( q(t)+p^{2}(t)\right) \cos \left[ \tfrac{\lambda }{%
\alpha }\left( \pi ^{\alpha }-2t^{\alpha }\right) +2Q(t)\right] d_{\alpha
}t\medskip   \notag \\
&&\left. +\int_{0}^{\pi }D_{t}^{\alpha }p(t)\sin \left[ \tfrac{\lambda }{%
\alpha }\left( \pi ^{\alpha }-2t^{\alpha }\right) +2Q(t)\right] d_{\alpha
}t\right\} \medskip  \\
&&+\tfrac{1}{4\lambda ^{3}}\left\{ \left[ 4p^{2}(0)+\tfrac{2\left( p(\pi
)+p(0)\right) ^{1+\alpha }-2^{2+\alpha }p^{1+\alpha }(0)+\left( p(\pi
)-p(0)\right) ^{1+\alpha }}{1+\alpha }\right. \right. \medskip   \notag \\
&&-\tfrac{1}{2}\left. \left( \int_{0}^{\pi }\left( q(t)+p^{2}(t)\right)
d_{\alpha }t\right) ^{2}\right] \sin \left( \tfrac{\lambda }{\alpha }\pi
^{\alpha }\right) \medskip   \notag \\
&&\left. -\left( \int_{0}^{\pi }\left( q(t)+p^{2}(t)\right) \left( p(\pi
)+p(0)+2p(t)\right) d_{\alpha }t\right) \cos \left( \tfrac{\lambda }{\alpha }%
\pi ^{\alpha }\right) \right\} \medskip   \notag \\
&&+o\left( \tfrac{1}{\lambda ^{3}}\exp \left( \tfrac{\left\vert \tau
\right\vert }{\alpha }\pi ^{\alpha }\right) \right) ,\text{ }\left\vert
\lambda \right\vert \rightarrow \infty .  \notag
\end{eqnarray}%
By the standard method using (15) and Rouche's theorem (see [42]) and taking 
$\Delta (\lambda _{n})=0$\ one can prove that eigenvalues $\lambda _{n}$
have the form%
\begin{equation}
\left. \lambda _{n}=\frac{n\alpha }{\pi ^{\alpha -1}}+\frac{a_{1}-A_{n}^{n}}{%
2n\pi }+\frac{\left( p(\pi )+p(0)\right) a_{1}+2a_{2}}{4n^{2}\pi ^{2-\alpha
}\alpha }+o\left( \frac{1}{n^{2}}\right) \right. ,\text{ }\left\vert
n\right\vert \rightarrow \infty ,
\end{equation}%
where $n\in 
%TCIMACRO{\U{2124} }%
%BeginExpansion
\mathbb{Z}
%EndExpansion
\backslash \left\{ 0\right\} ,$ $x_{n}^{0}=0,$ $x_{n}^{n}=\pi ,$ $\ j\in 
%TCIMACRO{\U{2124} }%
%BeginExpansion
\mathbb{Z}
%EndExpansion
,$%
\begin{equation*}
a_{1}=\int_{0}^{\pi }\left( q(t)+p^{2}(t)\right) d_{\alpha }t,\text{ }%
a_{2}=\int_{0}^{\pi }\left( q(t)+p^{2}(t)\right) p(t)d_{\alpha }t,
\end{equation*}%
\begin{equation*}
A_{n}^{j}=\int_{0}^{x_{n}^{j}}\left( q(t)+p^{2}(t)\right) \cos \left( \tfrac{%
2nt^{\alpha }}{\pi ^{\alpha -1}}-2Q(t)\right) d_{\alpha
}t-\int_{0}^{x_{n}^{j}}D_{t}^{\alpha }p(t)\sin \left( \tfrac{2nt^{\alpha }}{%
\pi ^{\alpha -1}}-2Q(t)\right) d_{\alpha }t.
\end{equation*}

\begin{corollary}
According to (16) for sufficiently large $\left\vert n\right\vert $ the
eigenvalues $\lambda _{n}$ are real and simple.
\end{corollary}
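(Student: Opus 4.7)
The plan is to read both reality and simplicity off the expansion (16) (for reality) and off a differentiated version of (15) (for simplicity), using the real-valuedness of the coefficients $p,q$ as the key structural input.

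For reality, I would first observe that every explicit coefficient that appears on the right-hand side of (16) is real: the leading term $n\alpha/\pi^{\alpha-1}$, the integrals $a_1,a_2$, and
\begin{equation*}
A_n^n=\int_{0}^{\pi}(q(t)+p^2(t))\cos\!\bigl(\tfrac{2nt^\alpha}{\pi^{\alpha-1}}-2Q(t)\bigr)d_\alpha t-\int_{0}^{\pi}D_t^\alpha p(t)\sin\!\bigl(\tfrac{2nt^\alpha}{\pi^{\alpha-1}}-2Q(t)\bigr)d_\alpha t
\end{equation*}
are all real because $p,q,Q,D_t^\alpha p$ are real. Denote by $\mu_n$ the explicit real part of (16) up to order $1/n^2$. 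Because $p(x)$ and $q(x)$ are real valued, the integral representations (12)--(13) imply $\overline{S(x,\lambda)}=S(x,\overline\lambda)$, and hence $\Delta(\lambda)=S(\pi,\lambda)$ satisfies $\overline{\Delta(\lambda)}=\Delta(\overline\lambda)$. Therefore its zeros come in complex-conjugate pairs. On the other hand, the Rouch\'e argument already used to derive (16) produces exactly one zero of $\Delta$ inside a disk of radius $o(1/n)$ around $\mu_n$ for $|n|$ sufficiently large. If $\lambda_n$ failed to be real, then $\overline{\lambda_n}$ would be a \emph{different} zero of $\Delta$ lying in the same disk, contradicting uniqueness; hence $\lambda_n=\overline{\lambda_n}\in\mathbb{R}$.

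For simplicity, I would differentiate (15) with respect to $\lambda$ (this is legitimate because $\Delta$ is entire and the error term $o\bigl(\tfrac{1}{\lambda^{3}}\exp(\tfrac{|\tau|}{\alpha}\pi^\alpha)\bigr)$ can be controlled on a horizontal strip by Cauchy's integral formula, so the $o$-estimate transfers to the derivative with one fewer power of $\lambda$ in the denominator). The dominant contribution is
\begin{equation*}
\Delta'(\lambda)=\frac{\pi^\alpha}{\alpha\lambda}\cos\!\bigl(\tfrac{\lambda}{\alpha}\pi^\alpha\bigr)+O\!\bigl(\tfrac{1}{\lambda^2}\bigr).
\end{equation*}
Evaluating at $\lambda=\lambda_n$, the leading part of (16) gives $\tfrac{\lambda_n}{\alpha}\pi^\alpha=n\pi+O(1/n)$, hence $\cos(\tfrac{\lambda_n}{\alpha}\pi^\alpha)=(-1)^n+O(1/n)$, so
\begin{equation*}
\Delta'(\lambda_n)=\frac{(-1)^n\pi^\alpha}{\alpha\lambda_n}+O\!\bigl(\tfrac{1}{n^2}\bigr)\neq 0
\end{equation*}
for $|n|$ large. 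Thus $\lambda_n$ is a simple zero of $\Delta$, i.e.\ a simple eigenvalue of $L_\alpha$.

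The main obstacle, as I see it, is the rigorous justification that the remainder in (15) may be differentiated term-by-term so as to yield an asymptotic for $\Delta'(\lambda)$ with a usable main term; once that is in hand, both conclusions follow from short evaluations of the leading asymptotics together with the conjugation symmetry of $\Delta$.
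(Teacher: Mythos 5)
Your argument is correct, and the reality half is essentially the paper's own: the paper (inside the proof of Theorem 2) notes that by (16) and Rouch\'e's theorem the disk $\Gamma_n=\{\lambda:\ |\lambda-\tfrac{n\alpha}{\pi^{\alpha-1}}|\le 1\}$ contains exactly one eigenvalue, and since $p,q$ are real-valued $\overline{\lambda_n}$ is also an eigenvalue in $\Gamma_n$, forcing $\lambda_n=\overline{\lambda_n}$; your version with the smaller disk around the real midpoint $\mu_n$ is the same idea. Where you genuinely diverge is simplicity: the paper gets it for free from the Rouch\'e count itself, since the comparison function $\tfrac{1}{\lambda}\sin\left(\tfrac{\lambda}{\alpha}\pi^{\alpha}\right)$ has a simple zero in $\Gamma_n$ and Rouch\'e counts zeros of $\Delta$ \emph{with multiplicity}, so ``exactly one zero in $\Gamma_n$'' already means a simple zero, with no need to touch $\Delta'$. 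You instead differentiate (15), justify transferring the remainder estimate to $\Delta'$ by Cauchy's formula on a strip (the right tool, and your identification of this as the only delicate point is accurate), and conclude $\Delta'(\lambda_n)=\tfrac{(-1)^n\pi^{\alpha}}{\alpha\lambda_n}+O\left(\tfrac{1}{n^{2}}\right)\neq 0$. Your route is longer but buys a quantitative lower bound on $|\Delta'(\lambda_n)|$, which is the kind of estimate one needs later for norming constants or for the nodal asymptotics; the paper's route is shorter and is already implicit in how (16) was derived. Both are sound.
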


\section{\textbf{Main Results}}

In this section, under condition (4) we obtain the asymptotics for the nodal
points of $L_{\alpha }$ and prove a constructive procedure for solving the
inverse nodal problem.

\begin{theorem}
For sufficiently large $\left\vert n\right\vert $, the eigenfunction $%
S(x,\lambda _{n})$ has exactly $\left\vert n\right\vert -1$ nodes $x_{n}^{j}$
in $\left( 0,\pi \right) $:

$0<x_{n}^{1}<x_{n}^{2}<...<x_{n}^{n-1}<\pi $ \ for $n>0$ $\medskip $\newline
and

$0<x_{n}^{-1}<x_{n}^{-2}<...<x_{n}^{n+1}<\pi $ \ for $n<0.\medskip $\newline
Moreover, the numbers $x_{n}^{j}$ satisfy the following asymptotic formula:%
\begin{equation}
\begin{array}{l}
\left. \left( x_{n}^{j}\right) ^{\alpha }=\dfrac{j\pi ^{\alpha }}{n}+\dfrac{%
Q(x_{n}^{j})}{n\pi ^{1-\alpha }}\right. \medskip \\ 
+\tfrac{1}{2n^{2}\pi ^{2-2\alpha }\alpha }\left[ \int_{0}^{x_{n}^{j}}\left(
q(t)+p^{2}(t)\right) d_{\alpha }t-\dfrac{a_{1}}{\pi ^{\alpha }}%
(x_{n}^{j})^{\alpha }-\left( A_{n}^{j}-\tfrac{A_{n}^{n}}{\pi ^{\alpha }}%
(x_{n}^{j})^{\alpha }\right) \right] \medskip \\ 
+\tfrac{1}{2n^{3}\pi ^{3-3\alpha }\alpha ^{2}}\left[ \int_{0}^{x_{n}^{j}}%
\left( q(t)+p^{2}(t)\right) p(t)d_{\alpha }t-\left( a_{2}+\tfrac{\left(
p(\pi )+p(0)\right) a_{1}}{2}\right) \tfrac{(x_{n}^{j})^{\alpha }}{\pi
^{\alpha }}\right] \medskip \\ 
+o\left( \frac{1}{n^{3}}\right) ,%
\end{array}%
\end{equation}%
uniformly with respect to j.
\end{theorem}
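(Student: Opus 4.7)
The plan is to solve $S(x_n^j,\lambda_n)=0$ by a small-parameter expansion based on the uniform asymptotic (14). Introduce the phase $\phi(x,\lambda):=\frac{\lambda}{\alpha}x^{\alpha}-Q(x)$. Because $D_x^{\alpha}\phi(x,\lambda_n)=\lambda_n-p(x)$ keeps the sign of $\lambda_n$ for large $|n|$, $\phi(\cdot,\lambda_n)$ is strictly monotone on $[0,\pi]$; moreover $\phi(0,\lambda_n)=0$, while (4) and (16) give $\phi(\pi,\lambda_n)=n\pi+O(1/n)$. Rewriting (14) as $\lambda_n S(x,\lambda_n)=\sin\phi(x,\lambda_n)+O(1/\lambda_n)$ uniformly on $[0,\pi]$, a Rouche-type perturbation argument forces $S(\cdot,\lambda_n)$ to have exactly $|n|-1$ simple zeros in $(0,\pi)$, located near the $|n|-1$ solutions of $\phi(x,\lambda_n)=j\pi$. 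This justifies the ansatz $\phi(x_n^j,\lambda_n)=j\pi+\delta_n^j$ with $\delta_n^j\to 0$ uniformly in $j$ and settles the node-counting assertion.

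Next I would substitute this ansatz into (14) at $x=x_n^j$, multiply by $\lambda_n$, and use $\sin(j\pi+\delta)=(-1)^j\sin\delta$, $\cos(j\pi+\delta)=(-1)^j\cos\delta$. Every $\sin[\phi(x_n^j)-2\phi(t)]$- or $\cos[\phi(x_n^j)-2\phi(t)]$-type integrand in (14) splits by the sum-of-angles formulas; the factors $\sin\delta_n^j,\cos\delta_n^j$ expand in powers of $\delta_n^j$, and replacing $\lambda_n$ inside $\phi(t)$ by its leading value $n\alpha/\pi^{\alpha-1}$ yields, up to errors of order $o(1/n^3)$, precisely the combination $A_n^j$ defined after (16). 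Invoking $\sin\delta=\delta+O(\delta^3)$ and $\cos\delta=1+O(\delta^2)$ reduces the equation to a scalar equation of the form $(-1)^j\delta_n^j+\lambda_n^{-1}F_1+\lambda_n^{-2}F_2=o(1/\lambda_n^2)$, solvable by one iteration to give $\delta_n^j$ through order $\lambda_n^{-2}$ uniformly in $j$.

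Formula (17) then follows by inverting $\phi(x_n^j,\lambda_n)=j\pi+\delta_n^j$, namely
\begin{equation*}
(x_n^j)^{\alpha}=\frac{\alpha}{\lambda_n}\bigl(j\pi+Q(x_n^j)+\delta_n^j\bigr),
\end{equation*}
and expanding $1/\lambda_n$ from (16) as a series in $1/n$. The main obstacle is the consistent bookkeeping through three orders of $1/n$: the eigenvalue expansion (16), the expansion of $\delta_n^j$ via $F_1,F_2$, and the expansion of $\lambda_n^{-1}$ must all be multiplied and truncated at $o(1/n^3)$. In particular the combination $A_n^j-A_n^n(x_n^j)^{\alpha}/\pi^{\alpha}$ in (17) arises as a delicate cancellation: the first term comes from the oscillatory integrals evaluated at $x_n^j$, the second from the $A_n^n$ appearing in the $1/n$ piece of $\lambda_n$ via $\Delta(\lambda_n)=0$. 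The implicit $Q(x_n^j)$ on the right-hand side is harmless, since substituting the leading-order approximation for $x_n^j$ produces only a further $o(1/n^3)$ correction.
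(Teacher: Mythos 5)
Your proposal is correct and follows essentially the same route as the paper: combine the uniform asymptotics (14) with the eigenvalue expansion (16), set $S(x_n^j,\lambda_n)=0$, reduce to a small-angle (tangent/arctangent) equation for the phase correction, and invert to obtain $\left(x_n^j\right)^{\alpha}$ through order $1/n^3$. The only differences are organizational --- you solve for the phase defect $\delta_n^j$ first and expand $1/\lambda_n$ at the end, and you make the zero-counting argument explicit, which the paper leaves implicit --- but the substance of the derivation coincides with the paper's proof.
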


\begin{proof}
It is obvious that according to (16) for sufficiently large $\left\vert
n\right\vert $ in the domain $\Gamma _{n}=\left\{ \left. \lambda \right\vert 
\text{ }\left\vert \lambda -\frac{n\alpha }{\pi ^{\alpha -1}}\right\vert
\leq 1\right\} $ there is exactly one eigenvalue $\lambda _{n}.$ Taking into
account the real-valuedness of $p(x),$ $q(x)$ we say that is also an
eigenvalue $\overline{\lambda _{n}}\in \Gamma _{n}$, and hence $\lambda _{n}=%
\overline{\lambda _{n}}.$ Therefore, the functions $S(x,\lambda _{n})$ are
real-valued for sufficiently large $\left\vert n\right\vert .$

Substituting (16) in (14) we get%
\begin{eqnarray}
&&\left. \lambda _{n}S(x,\lambda _{n})=\sin \left( \tfrac{nx^{\alpha }}{\pi
^{\alpha -1}}-Q(x)\right) \right. \medskip  \notag \\
&&+\tfrac{1}{2n\pi ^{1-\alpha }\alpha }\left\{ \left[ \left(
a_{1}-A_{n}^{n}\right) \tfrac{x^{\alpha }}{\pi ^{\alpha }}%
-\int_{0}^{x}\left( q(t)+p^{2}(t)\right) d_{\alpha }t\right] \cos \left( 
\tfrac{nx^{\alpha }}{\pi ^{\alpha -1}}-Q(x)\right) \right. \medskip  \notag
\\
&&+\left( p(x)+p(0)\right) \sin \left( \tfrac{nx^{\alpha }}{\pi ^{\alpha -1}}%
-Q(x)\right) \medskip  \notag \\
&&+\int_{0}^{x}\left( q(t)+p^{2}(t)\right) \cos \left( \tfrac{n\left(
x^{\alpha }-2t^{\alpha }\right) }{\pi ^{\alpha -1}}-Q(x)+2Q(t)\right)
d_{\alpha }t\medskip  \notag \\
&&+\left. \int_{0}^{x}D_{t}^{\alpha }p(t)\sin \left( \tfrac{n\left(
x^{\alpha }-2t^{\alpha }\right) }{\pi ^{\alpha -1}}-Q(x)+2Q(t)\right)
d_{\alpha }t\right\} \medskip \\
&&+\tfrac{1}{4n^{2}\pi ^{2-2\alpha }\alpha ^{2}}\left\{ \left[ \left( \left(
p(\pi )+p(0)\right) a_{1}+2a_{2}\right) \tfrac{x^{\alpha }}{\pi ^{\alpha }}%
+\left( p(x)+p(0)\right) a_{1}\tfrac{x^{\alpha }}{\pi ^{\alpha }}\right.
\right. \medskip  \notag \\
&&-\left. \int_{0}^{x}\left( q(t)+p^{2}(t)\right) \left(
p(x)+p(0)+2p(t)\right) d_{\alpha }t\right] \cos \left( \tfrac{nx^{\alpha }}{%
\pi ^{\alpha -1}}-Q(x)\right) \medskip  \notag \\
&&+\left[ 4p^{2}(0)+\tfrac{2\left( p(x)+p(0)\right) ^{1+\alpha }-2^{2+\alpha
}p^{1+\alpha }(0)+\left( p(x)-p(0)\right) ^{1+\alpha }}{1+\alpha }\right.
\medskip  \notag \\
&&+a_{1}\tfrac{x^{\alpha }}{\pi ^{\alpha }}\int_{0}^{x}\left(
q(t)+p^{2}(t)\right) d_{\alpha }t-\left( a_{1}\tfrac{x^{\alpha }}{\pi
^{\alpha }}\right) ^{2}\medskip  \notag \\
&&\left. \left. -\tfrac{1}{2}\left( \int_{0}^{x}\left( q(t)+p^{2}(t)\right)
d_{\alpha }t\right) ^{2}\right] \sin \left( \tfrac{nx^{\alpha }}{\pi
^{\alpha -1}}-Q(x)\right) \right\} +o\left( \tfrac{1}{n^{2}}\right) ,\text{ }%
\left\vert n\right\vert \rightarrow \infty ,  \notag
\end{eqnarray}%
uniformly in $x\in \left[ 0,\pi \right] $. From $S\left( x_{n}^{j},\lambda
_{n}\right) =0$, we get$\medskip $

$\sin \left( \tfrac{n\left( x_{n}^{j}\right) ^{\alpha }}{\pi ^{\alpha -1}}%
-Q(x_{n}^{j})\right) \medskip $

$+\frac{1}{2n\pi ^{1-\alpha }\alpha }\left\{ \left[ \left(
a_{1}-A_{n}^{n}\right) \tfrac{\left( x_{n}^{j}\right) ^{\alpha }}{\pi
^{\alpha }}-\int\limits_{0}^{x_{n}^{j}}\left( q(t)+p^{2}(t)\right) d_{\alpha
}t\right] \cos \left( \tfrac{n\left( x_{n}^{j}\right) ^{\alpha }}{\pi
^{\alpha -1}}-Q(x_{n}^{j})\right) \right. \medskip $

$+\left( p(x_{n}^{j})+p(0)\right) \sin \left( \tfrac{n\left(
x_{n}^{j}\right) ^{\alpha }}{\pi ^{\alpha -1}}-Q(x_{n}^{j})\right) \medskip $

$+\int\limits_{0}^{x_{n}^{j}}\left( q(t)+p^{2}(t)\right) \cos \left( \tfrac{%
n\left( \left( x_{n}^{j}\right) ^{\alpha }-2t^{\alpha }\right) }{\pi
^{\alpha -1}}-Q(x_{n}^{j})+2Q(t)\right) d_{\alpha }t\medskip $

$+\left. \int\limits_{0}^{x_{n}^{j}}D_{t}^{\alpha }p(t)\sin \left( \tfrac{%
n\left( \left( x_{n}^{j}\right) ^{\alpha }-2t^{\alpha }\right) }{\pi
^{\alpha -1}}-Q(x_{n}^{j})+2Q(t)\right) d_{\alpha }t\right\} \medskip $

$+\frac{1}{4n^{2}\pi ^{2-2\alpha }\alpha ^{2}}\left\{ \left[ \left( \left(
p(\pi )+p(0)\right) a_{1}+2a_{2}\right) \tfrac{\left( x_{n}^{j}\right)
^{\alpha }}{\pi ^{\alpha }}+\left( p(x_{n}^{j})+p(0)\right) a_{1}\tfrac{%
\left( x_{n}^{j}\right) ^{\alpha }}{\pi ^{\alpha }}\right. \right. \medskip $

$-\left. \int\limits_{0}^{x_{n}^{j}}\left( q(t)+p^{2}(t)\right) \left(
p(x_{n}^{j})+p(0)+2p(t)\right) d_{\alpha }t\right] \cos \left( \tfrac{%
n\left( x_{n}^{j}\right) ^{\alpha }}{\pi ^{\alpha -1}}-Q(x_{n}^{j})\right)
\medskip $

$+\left[ 4p^{2}(0)+\tfrac{2\left( p(x_{n}^{j})+p(0)\right) ^{1+\alpha
}-2^{2+\alpha }p^{1+\alpha }(0)+\left( p(x_{n}^{j})-p(0)\right) ^{1+\alpha }%
}{1+\alpha }\right. \medskip $

$+a_{1}\tfrac{\left( x_{n}^{j}\right) ^{\alpha }}{\pi ^{\alpha }}%
\int\limits_{0}^{x_{n}^{j}}\left( q(t)+p^{2}(t)\right) d_{\alpha }t-\left(
a_{1}\tfrac{\left( x_{n}^{j}\right) ^{\alpha }}{\pi ^{\alpha }}\right)
^{2}\medskip $

$-\tfrac{1}{2}\left. \left. \left( \int\limits_{0}^{x_{n}^{j}}\left(
q(t)+p^{2}(t)\right) d_{\alpha }t\right) ^{2}\right] \sin \left( \tfrac{%
n\left( x_{n}^{j}\right) ^{\alpha }}{\pi ^{\alpha -1}}-Qx_{n}^{j})\right)
\right\} +o\left( \tfrac{1}{n^{2}}\right) =0,$ $\left\vert n\right\vert
\rightarrow \infty .\medskip $

If last equality is divided by $\cos \left( \tfrac{n\left( x_{n}^{j}\right)
^{\alpha }}{\pi ^{\alpha -1}}-Q(x_{n}^{j})\right) $ and necessary
arrangements are made, we obtain$\medskip $

$\tan \left( \tfrac{n\left( x_{n}^{j}\right) ^{\alpha }}{\pi ^{\alpha -1}}%
-Q(x_{n}^{j})\right) =\medskip $

$\left\{ 1+\frac{1}{2n\pi ^{1-\alpha }\alpha }\left[ p(x_{n}^{j})+p(0)+\int%
\limits_{0}^{x_{n}^{j}}\left( q(t)+p^{2}(t)\right) \sin \left( \tfrac{%
2nt^{\alpha }}{\pi ^{\alpha -1}}-2Q(t)\right) d_{\alpha }t\right. \right.
\medskip $

$\left. +\int\limits_{0}^{x_{n}^{j}}D_{t}^{\alpha }p(t)\cos \left( \tfrac{%
2nt^{\alpha }}{\pi ^{\alpha -1}}-2Q(t)\right) d_{\alpha }t\right] \medskip $

$+\frac{1}{4n^{2}\pi ^{2-2\alpha }\alpha ^{2}}\left[ 4p^{2}(0)+\tfrac{%
2\left( p(x_{n}^{j})+p(0)\right) ^{1+\alpha }-2^{2+\alpha }p^{1+\alpha
}(0)+\left( p(x_{n}^{j})-p(0)\right) ^{1+\alpha }}{1+\alpha }\right.
\medskip $

$\left. \left. +a_{1}\tfrac{\left( x_{n}^{j}\right) ^{\alpha }}{\pi ^{\alpha
}}\int\limits_{0}^{x_{n}^{j}}\left( q(t)+p^{2}(t)\right) d_{\alpha }t-\left(
a_{1}\tfrac{\left( x_{n}^{j}\right) ^{\alpha }}{\pi ^{\alpha }}\right) ^{2}-%
\tfrac{1}{2}\left( \int\limits_{0}^{x_{n}^{j}}\left( q(t)+p^{2}(t)\right)
d_{\alpha }t\right) ^{2}\right] \right\} ^{-1}\times \medskip $

$\times \left\{ \frac{1}{2n\pi ^{1-\alpha }\alpha }\left[ \left(
A_{n}^{n}-a_{1}\right) \tfrac{\left( x_{n}^{j}\right) ^{\alpha }}{\pi
^{\alpha }}-A_{n}^{j}+\int_{0}^{x_{n}^{j}}\left( q(t)+p^{2}(t)\right)
d_{\alpha }t\right] \right. \medskip $

$+\frac{1}{4n^{2}\pi ^{2-2\alpha }\alpha ^{2}}\left[ \int%
\limits_{0}^{x_{n}^{j}}\left( q(t)+p^{2}(t)\right) \left(
p(x_{n}^{j})+p(0)+2p(t)\right) d_{\alpha }t\right. \medskip $

$\left. \left. -\left( \left( p(\pi )+p(0)\right) a_{1}+2a_{2}\right) \tfrac{%
\left( x_{n}^{j}\right) ^{\alpha }}{\pi ^{\alpha }}-\left(
p(x_{n}^{j})+p(0)\right) a_{1}\tfrac{\left( x_{n}^{j}\right) ^{\alpha }}{\pi
^{\alpha }}+o\left( \tfrac{1}{n^{2}}\right) \right] \right\} ,$ $\left\vert
n\right\vert \rightarrow \infty \medskip $\newline
or$\medskip $

$\tan \left( \tfrac{n\left( x_{n}^{j}\right) ^{\alpha }}{\pi ^{\alpha -1}}%
-Q(x_{n}^{j})\right) =\frac{1}{2n\pi ^{1-\alpha }\alpha }\left[ \left(
A_{n}^{n}-a_{1}\right) \tfrac{\left( x_{n}^{j}\right) ^{\alpha }}{\pi
^{\alpha }}-A_{n}^{j}+\int\limits_{0}^{x_{n}^{j}}\left( q(t)+p^{2}(t)\right)
d_{\alpha }t\right] \medskip $

$+\frac{1}{2n^{2}\pi ^{2-2\alpha }\alpha ^{2}}\left[ \int%
\limits_{0}^{x_{n}^{j}}\left( q(t)+p^{2}(t)\right) p(t)d_{\alpha }t-\left(
a_{2}+\frac{\left( p(\pi )+p(0)\right) a_{1}}{2}\right) \tfrac{\left(
x_{n}^{j}\right) ^{\alpha }}{\pi ^{\alpha }}\right] \medskip $

$+o\left( \tfrac{1}{n^{2}}\right) ,$ $\left\vert n\right\vert \rightarrow
\infty .\medskip $

Taking Taylor's expansion formula for the arctangent into account, we get$%
\medskip $

$\tfrac{n\left( x_{n}^{j}\right) ^{\alpha }}{\pi ^{\alpha -1}}%
-Q(x_{n}^{j})=j\pi +\frac{1}{2n\pi ^{1-\alpha }\alpha }\left[ \left(
A_{n}^{n}-a_{1}\right) \tfrac{\left( x_{n}^{j}\right) ^{\alpha }}{\pi
^{\alpha }}-A_{n}^{j}+\int\limits_{0}^{x_{n}^{j}}\left( q(t)+p^{2}(t)\right)
d_{\alpha }t\right] \medskip $

$+\frac{1}{2n^{2}\pi ^{2-2\alpha }\alpha ^{2}}\left[ \int%
\limits_{0}^{x_{n}^{j}}\left( q(t)+p^{2}(t)\right) p(t)d_{\alpha }t-\left(
a_{2}+\frac{\left( p(\pi )+p(0)\right) a_{1}}{2}\right) \tfrac{\left(
x_{n}^{j}\right) ^{\alpha }}{\pi ^{\alpha }}\right] \medskip $

$+o\left( \tfrac{1}{n^{2}}\right) ,$ $\left\vert n\right\vert \rightarrow
\infty .\medskip $

From the last equality, we arrive at (17).
\end{proof}

\begin{corollary}
From (17) it is clear that the set $X$ of all nodal points is dense in the
interval $\left[ 0,\pi \right] .$
\end{corollary}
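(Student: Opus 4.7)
The plan is to read (17) as $(x_n^j)^\alpha = j\pi^\alpha/n + O(1/n)$ uniformly in $j$, and then to approximate any prescribed point of $[0,\pi]$ by suitable choices of $(n,j)$. Concretely, for a fixed $x^*\in(0,\pi)$, set $t^* := (x^*/\pi)^\alpha\in(0,1)$, and for each large $n>0$ choose the integer $j_n := \lfloor n t^*\rfloor$, which lies in $\{1,\ldots,n-1\}$ once $n$ is large enough. The leading term of (17) then satisfies $j_n\pi^\alpha/n \to \pi^\alpha t^* = (x^*)^\alpha$, and the conclusion will follow once every remaining term in (17) is shown to be $o(1)$ as $n\to\infty$, uniformly in $j\in\{1,\ldots,n-1\}$.

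To justify the required uniformity, I would observe that each correction term in (17) is a bounded quantity divided by a positive power of $n$. The term $Q(x_n^j)/(n\pi^{1-\alpha})$ is $O(1/n)$ because $Q$ is continuous, hence bounded, on $[0,\pi]$. In the $1/n^2$ bracket, the integrals $\int_0^{x_n^j}(q(t)+p^2(t))\,d_\alpha t$ and $A_n^j$ are each bounded uniformly in $n$ and $j$, since their integrands are bounded on $[0,\pi]$; the terms involving $a_1$, $A_n^n$ and the factor $(x_n^j)^\alpha/\pi^\alpha\in[0,1]$ are bounded by the same reasoning. The same argument handles the $1/n^3$ bracket using the boundedness of $p$ and $D_x^\alpha p$. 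Hence the total correction is $O(1/n)$, uniformly in $j$.

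Combining these estimates yields $(x_n^{j_n})^\alpha\to (x^*)^\alpha$, and by continuity of the map $t\mapsto t^{1/\alpha}$ on $[0,\pi^\alpha]$ one concludes $x_n^{j_n}\to x^*$. The endpoints $x^*=0$ and $x^*=\pi$ can be reached by taking $j=1$ and $j=n-1$ respectively, since then $j\pi^\alpha/n \to 0$ or $\pi^\alpha$. Thus every point of $[0,\pi]$ is a limit of nodal points, i.e.\ $X$ is dense in $[0,\pi]$. The only delicate point in the whole argument is the uniformity of the error estimates in $j$, and this is immediate from the fact that $p$, $q$ and $D_x^\alpha p$ are bounded on the compact interval $[0,\pi]$, together with the trivial bound $(x_n^j)^\alpha/\pi^\alpha \le 1$.
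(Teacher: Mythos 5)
Your argument is correct and is exactly the reasoning the paper leaves implicit: the paper offers no proof beyond ``clear from (17)'', and the intended justification is precisely your observation that $(x_n^j)^\alpha = j\pi^\alpha/n + O(1/n)$ uniformly in $j$, so that choosing $j_n \approx n(x^*/\pi)^\alpha$ makes the nodal points accumulate at any prescribed $x^*\in[0,\pi]$. Your filling in of the uniform boundedness of $Q$, $A_n^j$ and the bracketed integrals (via boundedness of $p$, $q$, $D_x^\alpha p$ and integrability of $t^{\alpha-1}$) is sound, so this is the same approach, just made explicit.
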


For each fixed $x\in \left[ 0,\pi \right] $ and $\alpha \in (0,1]$. We can
choose a sequence $\left\{ j_{n}\right\} \subset X$ so that $\underset{%
\left\vert n\right\vert \rightarrow \infty }{\lim }x_{n}^{j_{n}}=x$. Then,
there exist finite limits and corresponding equalities hold :%
\begin{equation}
Q(x)=\pi ^{1-\alpha }\underset{\left\vert n\right\vert \rightarrow \infty }{%
\lim }\left( n\left( x_{n}^{j_{n}}\right) ^{\alpha }-j_{n}\pi ^{\alpha
}\right) ,
\end{equation}%
\begin{equation}
f(x):=2\pi ^{1-\alpha }\alpha \underset{\left\vert n\right\vert \rightarrow
\infty }{\lim }n\left[ \pi ^{1-\alpha }\left( n\left( x_{n}^{j_{n}}\right)
^{\alpha }-j_{n}\pi ^{\alpha }\right) -Q(x_{n}^{j_{n}})\right] ,
\end{equation}%
\begin{eqnarray}
&&\left. g(x):=\pi ^{1-\alpha }\alpha \underset{\left\vert n\right\vert
\rightarrow \infty }{\lim }n\left\{ 2\pi ^{1-\alpha }\alpha \left[ n\pi
^{1-\alpha }\left( n\left( x_{n}^{j_{n}}\right) ^{\alpha }-j_{n}\pi ^{\alpha
}\right) -Q(x_{n}^{j_{n}})\right] \right. \right. \medskip \\
&&\left. -f(x_{n}^{j_{n}})+A_{n}^{j_{n}}-A_{n}^{n}\tfrac{\left(
x_{n}^{j_{n}}\right) ^{\alpha }}{\pi ^{\alpha }}\right\}  \notag
\end{eqnarray}%
and%
\begin{equation}
f(x)=\int_{0}^{x}\left( q(t)+p^{2}(t)\right) d_{\alpha }t-\tfrac{x^{\alpha }%
}{\pi ^{\alpha }}\int_{0}^{\pi }\left( q(t)+p^{2}(t)\right) d_{\alpha }t,
\end{equation}%
\begin{eqnarray}
&&\left. g(x)=\int_{0}^{x}\left( q(t)+p^{2}(t)\right) p(t)d_{\alpha }t-%
\tfrac{x^{\alpha }}{\pi ^{\alpha }}\int_{0}^{\pi }\left(
q(t)+p^{2}(t)\right) p(t)d_{\alpha }t\right. \medskip \\
&&-\tfrac{x^{\alpha }\left( p(\pi )+p(0)\right) }{\pi ^{\alpha }}%
\int_{0}^{\pi }\left( q(t)+p^{2}(t)\right) d_{\alpha }t.  \notag
\end{eqnarray}

Therefore we can prove the following theorem for the solution of the inverse
nodal problem.

\begin{theorem}
Given any dense subset of nodal points $X_{0}\subset X$ uniquely determines
the functions $p(x)$\ and $q(x)$ a.e. on $\left[ 0,\pi \right] $. Moreover,
these functions can be found by the following algorithm.

\textbf{Step-1:} For each fixed $x\in \left[ 0,\pi \right] $ and $\alpha \in
(0,1],$ choose a sequence $\left( x_{n}^{j_{n}}\right) \subset X_{0}$ such
that $\underset{\left\vert n\right\vert \rightarrow \infty }{\lim }%
x_{n}^{j_{n}}=x,$

\textbf{Step-2: }Find the function $Q(x)$ from (19) and calculate 
\begin{equation}
p(x)=D_{x}^{\alpha }Q(x),
\end{equation}

\textbf{Step-3: }Find the function $f(x)$ from (20) and determine 
\begin{equation}
q(x)-\tfrac{1}{\pi ^{\alpha }}\int_{0}^{\pi }q(t)d_{\alpha
}t:=r(x)=D_{x}^{\alpha }f(x)-p^{2}(x)+\tfrac{1}{\pi ^{\alpha }}\int_{0}^{\pi
}p^{2}(t)d_{\alpha }t,
\end{equation}

\textbf{Step-4: }For each fixed $x\in \left[ 0,\pi \right] $ and $\alpha \in
(0,1],$ $\alpha Q(x)-x^{\alpha }\left( p(\pi )+p(0)\right) \neq 0,$ find $%
g(x)$ from (21) and calculate%
\begin{eqnarray}
&&\left. \tfrac{1}{\pi ^{\alpha }}\int_{0}^{\pi }q(t)d_{\alpha }t=\tfrac{%
\alpha }{\alpha Q(x)-x^{\alpha }\left( p(\pi )+p(0)\right) }\left[
g(x)-\int_{0}^{x}\left( r(t)+p^{2}(t)\right) p(t)d_{\alpha }t\right. \right.
\medskip \\
&&\left. +\tfrac{x^{\alpha }}{\pi ^{\alpha }}\int_{0}^{\pi }\left(
r(t)+p^{2}(t)\right) p(t)d_{\alpha }t+\tfrac{x^{\alpha }\left( p(\pi
)+p(0)\right) }{\pi ^{\alpha }}\int_{0}^{\pi }\left( r(t)+p^{2}(t)\right)
d_{\alpha }t\right] ,  \notag
\end{eqnarray}

\textbf{Step-5: }Calculate the function $q(x)$ via the formula 
\begin{equation}
q(x)=r(x)+\tfrac{1}{\pi ^{\alpha }}\int_{0}^{\pi }q(t)d_{\alpha }t.
\end{equation}

\begin{proof}
Formula (24) it is obvious from (10).

Differentiating (22) we get $\medskip $

$D_{x}^{\alpha }f(x)=q(x)+p^{2}(x)-\tfrac{1}{\pi ^{\alpha }}%
\int\limits_{0}^{\pi }\left( q(t)+p^{2}(t)\right) d_{\alpha }t.\medskip $

Denote $r(x):=q(x)-\tfrac{1}{\pi ^{\alpha }}\int\limits_{0}^{\pi
}q(t)d_{\alpha }t.$ We obtain immediately formula (25).$\medskip $

Substituting the function $q(x)=r(x)-\tfrac{1}{\pi ^{\alpha }}%
\int\limits_{0}^{\pi }q(t)d_{\alpha }t$ in (23) and taking (4) into account
we get formula (26). $\medskip $

Finally, from (25) and (26) we arrive at (27).$\medskip $
\end{proof}
\end{theorem}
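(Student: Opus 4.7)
The plan is to treat the five-step algorithm as a direct inversion of the hierarchy of asymptotic identities (19)--(23) that has been prepared before the theorem. Given the hypothesis that $X_0$ is dense in $[0,\pi]$, for each fixed $x \in [0,\pi]$ I can choose a sequence $\{x_n^{j_n}\} \subset X_0$ with $x_n^{j_n} \to x$, so the three limits $Q(x)$, $f(x)$, $g(x)$ are each expressible as functionals of $X_0$ alone. Steps 1 and 2 are then immediate: (19) yields $Q$, and since $Q = I_\alpha p$ by its defining formula (10), Lemma 3 gives $p(x) = D_x^\alpha Q(x)$, which is (24).

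For Step 3 I read $f(x)$ off (20) and apply $D_x^\alpha$ to the closed form (22). By Lemma 3 the Volterra term contributes $q(x) + p^2(x)$, while the complementary term $\tfrac{x^\alpha}{\pi^\alpha}\int_0^\pi (q+p^2)\,d_\alpha t$ reduces to the normalized mean appearing on the right of (25). Setting $r(x) := q(x) - \tfrac{1}{\pi^\alpha}\int_0^\pi q(t)\,d_\alpha t$ and isolating it yields (25). At this point $p(x)$ and $r(x)$ are known from $X_0$, and only the scalar $c := \tfrac{1}{\pi^\alpha}\int_0^\pi q(t)\,d_\alpha t$ remains to be identified.

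Step 4 is the substantive step. I substitute $q(t) = r(t) + c$ directly into the closed form (23) for $g(x)$ and collect the $c$-contributions. The normalization (4), $\int_0^\pi p(t)\,d_\alpha t = 0$, eliminates several of them; the surviving $c$-terms combine into a single expression proportional to $\alpha Q(x) - x^\alpha(p(\pi)+p(0))$, while all remaining terms involve only the already-reconstructed $p$ and $r$. Under the standing hypothesis $\alpha Q(x) - x^\alpha(p(\pi)+p(0)) \neq 0$, this produces a scalar linear equation which one solves by division to obtain (26). Step 5 is then the trivial recombination $q(x) = r(x) + c$, which is (27); uniqueness follows as a corollary since every output of the algorithm depends only on $X_0$, so two operators with the same dense nodal data yield the same $p$ and $q$ almost everywhere.

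The main obstacle is the bookkeeping in Step 4: I must track which $c$-contributions cancel under condition (4) and verify that the residual $c$-coefficient matches exactly the denominator $[\alpha Q(x) - x^\alpha(p(\pi)+p(0))]/\alpha$ appearing in (26). Everything else in the proof is a direct application of the fundamental theorem of conformable calculus (Lemma 3) to the closed forms (22) and (23), whose derivations from the nodal asymptotic (17) are the preparatory computation that lets the algorithm get off the ground.
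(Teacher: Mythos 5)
Your proposal is correct and follows essentially the same route as the paper: recover $Q$ from (19) and apply $D_x^\alpha$ via (10) to get $p$; differentiate the closed form (22) to isolate $r(x)=q(x)-\tfrac{1}{\pi^\alpha}\int_0^\pi q\,d_\alpha t$; substitute $q=r+c$ into (23), use the normalization (4) to cancel the mean-value contributions, and solve the resulting linear equation for the constant $c$ under the nonvanishing condition on $\alpha Q(x)-x^\alpha\left(p(\pi)+p(0)\right)$; then recombine. In fact your explicit tracking of the $c$-terms (the $cQ(x)$ contribution and the $-cx^\alpha\left(p(\pi)+p(0)\right)/\alpha$ term) spells out the bookkeeping the paper compresses into "taking (4) into account," so no gap remains.
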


\end{document}